\theoremstyle{plain}
\newtheorem{definition}{Definition}
\newtheorem{proposition}{Proposition}
\newtheorem{lemma}[proposition]{Lemma}
\newtheorem*{proposition*}{Proposition}
\newtheorem*{theorem*}{Theorem}
\newtheorem*{corollary*}{Corollary}
\newtheorem*{lemma*}{Lemma}
\newtheorem*{remark*}{Remark}
\newtheorem*{example*}{Example}
\newcommand{\Z}{\mathbb{Z}}
\newcommand{\Q}{\mathbb{Q}}
\newcommand{\R}{\mathbb{R}}
\begin{document}

\title{Open Gromov-Witten theory on Calabi-Yau three-folds II}

\author{Vito Iacovino}

\address{Max-Planck-Institut f\"ur Mathematik, Bonn}

\email{iacovino@mpim-bonn.mpg.de}

\date{version: \today}


\begin{abstract}
We propose a general theory of the Open Gromov-Witten invariant on Calabi-Yau three-folds.
In this paper we construct the Open Gromov-Witten potential. The evaluation of the potential on its critical points leads to numerical invariants.
\end{abstract}

\maketitle

\section{Introduction}
This paper is a continuation of \cite{I1}.
Let $M$ be a Calabi-Yau three-fold and let $L$ be a Special Lagrangian submanifold of $M$. In \cite{I1} we construct the Open Gromov-Witten invariant in the case that each connected component of $L$ has the rational homology of a sphere. In this paper we consider the problem without any restriction on the topology of the Lagrangian.

We construct the Open Gromov-Witten potential $S$, that is the effective action of the Open Topological String (\cite{W}). $S$ is a homotopy class of solutions of the Master equation in the ring of the functions on $H^*(L)$ with coefficients in the Novikov ring.
$S$ is defined up to master homotopy. The master homotopy is unique up to equivalence. 


The construction of $S$ is made in terms of perturbative Chern-Simons integrals on the Lagrangian submanifold. This work makes more apparent the relation with the work of Witten (\cite{W}).

For acyclic connections, the perturbative expansion of Chern-Simons theory has been constructed rigorously by Axelrod and Singer (\cite{AS}) and Kontsevich (\cite{Ko}). The perturbative expansion has been recently generalized to non-acyclic connections by Costello (\cite{Co}) (see also \cite{I}).

We will need to consider only abelian Chern-Simons theory. We will use the geometric approach similar to \cite{I}. The abelian Chern-Simons theory is in some sense trivial since it has not tree-valent vertices. Its partition function is related to the Ray-Singer torsion. The Gromov-Witten potential is defined by some generalization of Wilson loop integral. In the Appendix we recall the geometric construction of the propagator for the abelian Chern-Simons theory.
The usual anomaly of non-abelian Chern-Simons theory (\cite{AS}, \cite{I}) does not enter in the analysis. Therefore it is not necessary to pick a frame of the Lagrangian submanifold. 


In the last section consider the higher genus potential. This is a formal expansion in the string constant $\lambda$. The potential is defined up to quantum master homotopy.

In the particular case that there is an anti-holomorphic involution, our invariant coincide with the one of \cite{So}. It is actually easy to see the contribute of the multi-disks (associated to trees with at least two vertices) cancel out due to the action of the involution. It is also clear that in the higher genus case this cancellation does not hold. This make clear why the argument of \cite{So} cannot be extended to higher genus invariants. It is necessary to consider multi-curves also in this particular case.

In the case of the $S^1$-action considered in \cite{L} the correction of the multi-curves are not zero. Therefore our invariant computes corrections to the invariant of \cite{L}. This correction should be particular relevant in comparison with physics computations of the multi-covering formula and Gupakumar-Vafa invariants.

The evaluation of $S$ on its critical points leads to numerical invariants. In general this invariant is not associated to a fixed relative homology class, but to a fixed area. This problem is already present in the particular case that there is an anti-holomorphic involution (see \cite{So}) where instead of counting disks in a fixed homological class it is possible only to fix the projection of the class in the $-1$ eigenspace of the involution acting on $H_2(M,L,\Q)$.
We believe that the critical points of $S$ are related to the homotopy classes of bounding chains (\cite{FO3}). This correspondence should shed light on the relation between our invariant and the invariant of Joyce (\cite{Jo}).


\section{Systems of homological chains}

For each decorated tree $T$, let $C_T(L)$ be the orbifold
$$ C_T(L) = \left( \prod_{e \in E^(T)} C_e(L) \right) / \text{Aut}(T) .$$
The boundary of $C_T(L)$ can be decomposed in boundary faces corresponding to isomorphims classes of pairs $(T,e)$ where $e$ is an internal edges
$$ \partial_e C_T(L) = \left( C_e(L) \times \prod_{e' \neq e} C_{e'}(L) \right) / \text{Aut}(T,e) . $$

A system of homological chains $W_{\mathcal{T}}$ assigns to each decorated tree $T$ an homological chains
$W_T \in C_{|E(T)|}( C_T(L) , o_T) $
with twisted coefficients in $o_T$. 
We identify two systems of homological chains if they represent the same collection of currents. We assume the following properties.

\begin{itemize}

\item[(B)] For each $T \in \mathcal{T}$, $W_T$ intersects transversely the boundary of $C_T(L)$. For each internal edge $e$ define $ \partial_e W_T = W_T \cap \partial_e C_T(L)$.
Since $\partial C_2(L) \rightarrow L$ is an $S^2$-fibration, the induced map 
\begin{equation} \label{chain-fibration}
\partial_e C_T(L) \rightarrow  L \times C_{T/e}(L) 
\end{equation}
is an $S^2$-fibration.
We assume that there exist homological chains
$$\partial_e' W_T \in C_{|E(T)| -3}( C_{T/e}(L) , o_T )$$ 
such that $\partial_e W_T$ is the geometric preimage of $\partial_e' W_T$ over the $S^2$-fibration (\ref{chain-fibration}). Here we need to consider the homological chains as chains 
\end{itemize}

Let $\partial_e^0 W_T$ be the image of $\partial_e' W_T$ using the projection $ L \times C_{T/e}(L) \rightarrow C_{T/e}(L) $.
Define 
$$ \partial_v W_T =  \sum_{T'/e=(T,v)} \partial_e^0 W_{T'}    $$
where the sum is over all the trees $T'$ and edges $e \in T'$ such that $T'/e \cong (T,v)$.
We assume that  
\begin{equation} \label{boundary-collection}
\partial W_T = \sum_{v \in V(T)} \partial_v W_T + \sum_{e \in E(T)} \partial_e W_T.
\end{equation}
Equation (\ref{boundary-collection}) is considered as an equation of currents.

\subsection{Homotopies}

An homotopy $Y_{\mathcal{T}}$ between $W_{\mathcal{T}}$ and $W_{\mathcal{T}}'$ is a collection of homological chains
$$Y_T \in C_{|E(T)|+1}([0,1] \times C_T(L) , o_T )$$
that satisfy condition $(B)$ and
\begin{equation} \label{boundary-homotopy}
\partial Y_T = \sum_{v \in V(T)} \partial_v Y_T + \sum_{e \in E(T)} \partial_e Y_T + \{ 0 \} \times W_T - \{ 1 \} \times W_T'
\end{equation}

Suppose that $Y_{\mathcal{T}}$ and $X_{\mathcal{T}}$ are two homotopies between $W_{\mathcal{T}}$ and $W_{\mathcal{T}}'$. We say that $Y_{\mathcal{T}}$ is equivalent to $X_{\mathcal{T}}$ if there exists a collection of chains $Z_{\mathcal{T}}$ with
$$Z_T \in C_{|E(T)|+2}([0,1]^2 \times C_T(L) ,o_T )$$
that satisfy condition $(B)$ such that
\begin{eqnarray} \label{boundary-equivalence}
\partial Z_T & \hspace{-0.1in} =& \hspace{-0.1in} \sum_{v \in V(T)} \partial_v Z_T + \sum_{e \in E(T)} \partial_e Z_T \\ 
   & \hspace{-0.1in} +& \hspace{-0.1in} [0,1] \times \{ 0 \} \times W_T - [0,1] \times \{ 1 \} \times W_T' + \{ 0 \} \times Y_T - \{ 1 \} \times X_T
   \nonumber
\end{eqnarray}

\subsection{Degenerate system of chains}

Observe that it is possible to describe a systems of chains as chains on $L^{H(T)}$ instead of $C_T(L)$ as follows. 

For each $T \in \mathcal{T}$ we have an homological chain
$W_T \in C_{|E(T)|}( L^{H(T)}) \otimes o_T$
such that for each $e \in E(T)$ intersect transversely 
$$ \Delta_e= \pi_e^{-1} (\Delta) .$$ 
Here $\Delta \in L^2$ is the diagonal and $\pi_e : L^{H(T)} \rightarrow L^2$ is the natural projection. 
The existence of $W_T$ is equivalent to condition $(B)$.
Define $\partial_e W_T = W_T \cap \Delta_e$. Observe that from $\partial_e W_T$ we can define a chain $\partial_e^0 W_{T}$ on $L^{H(T/e)}$. Equation (\ref{boundary-collection}) is equivalent to
$$ \partial W_T = \sum_{T'/e=T} \partial_e^0 W_{T'} .$$
The same considerations apply to homotopies and equivalence of homotopies.

A germ of of homotopies is given by a collection of chains
$$Y_T \in C_{|E(T)|+1}([0,\delta) \times L^{H(T)}, o_T) $$
for some $\delta >0$ such that on the open interval $(0,\delta)$, $Y_{\mathcal{T}}$ satisfies the compatibility condition on the boundary and the following transversality condition on the intersection $ Y_T \cap (\{ 0 \} \times \Delta_e )$ holds.
Locally we consider $\psi : [0, \delta) \times D \rightarrow [0,\delta) \times L^{H(T)}$ where $D$ is some domain of $\R^{|H(T)|}$. Consider the set $U$ of points $p \in D$ such that $\psi(p,0) \in \pi_e^{-1}(\Delta)$. We assume that over $U$, $ \psi'(p,0)$ defines a section of $ T (L^{H(T)})/ T(\Delta_e) $ that is transverse to the zero section.

A germ of equivalence of homotopies is a collection of chains
$$Z_T \in C_{|E(T)|+2}([0,1] \times [0,\delta) \times L^{H(T)}, o_T ).$$
that are transverse to the diagonals on $[0,1] \times (0, \delta)$, are compatible in the boundary, $ Z_T \cap ([0,1] \times \{ 0 \} \times L^{H(T)}) $ is zero as $|E(T)|+1$ current and the intersection $ Z_T \cap ([0,1] \times \{ 0 \} \times \Delta_e )$ is transverse in the same sense we described above for germs of homotopies.

A degenerate chain is an equivalence class of germs of homotopies. The definition of homotopy and equivalence of homotopies extends naturally to degenerate chains.

\subsection{Gluing property}

Let $\mathcal{T}^1$ be the set of decorated trees with one marked internal edge. Let $\mathcal{T}^{0,1}$ be the set of decorated trees with one marked external edge. Observe that
$$ \mathcal{T}^1 = (\mathcal{T}^{0,1} \times \mathcal{T}^{0,1}) / \Z_2 $$

To the element $(T,e) \in \mathcal{T}^1$ corresponds $(T_1,e_1), (T_2,e_2) \in \mathcal{T}^{0,1}$ where $T_1$ and $T_2$ are the trees made cutting the edge $e$ in two edges $e_1$ and $e_2$.

In this section we will consider system of chains on the set $ \mathcal{T}^1$ and $\mathcal{T}^{0,1}$. The definitions of homotopy of chains and equivalence of homotopy extend straightforwardly to these systems. Also using the forgetful map $\mathcal{T}^1 \rightarrow \mathcal{T}$ and $\mathcal{T}^{0,1} \rightarrow \mathcal{T}$ a system of chain $W_{\mathcal{T}}$ induces a system of chains $W_{\mathcal{T}^1}$ and $W_{\mathcal{T}^{0,1}}$.

\begin{definition} \label{gluing}
A system of chain $W_{\mathcal{T}}$ has the gluing property if
$$ W_{\mathcal{T}^1} = W_{\mathcal{T}^{0,1}} \times W_{\mathcal{T}^{0,1}} .$$
\end{definition}

We wnat now define a notion of gluing property up to homotopy that it is easier to satisfy.

Let $\mathcal{T}^2$ be the set of decorated trees with two marked ordered internal edges. Observe that there exist an action of the $S_2$ on $\mathcal{T}^2$ that switch the order of the marked edges.

Assume that there is an homotopy $Y_{\mathcal{T}^1}$ between $W_{\mathcal{T}^1}$ and $W_{\mathcal{T}^{0,1}} \times W_{\mathcal{T}^{0,1}} $. Let $Y_{\mathcal{T}^2}$ be the induced homotopy on $\mathcal{T}^2$. Then $Y_{\mathcal{T}^2}$ is an homotopy between $W_{\mathcal{T}^2}$ and $W_{\mathcal{T}^{0,1}} \times W_{\mathcal{T}^{1,1}} $ and the composition
\begin{equation} \label{YY-comp}
Y_{\mathcal{T}^2} \circ (W_{\mathcal{T}^{0,1}} \times Y_{\mathcal{T}^{1,1}})
\end{equation}
is an homotopy between $W_{\mathcal{T}^2}$ and $W_{\mathcal{T}^{0,1}} \times W_{\mathcal{T}^{0,2}} \times W_{\mathcal{T}^{0,1}}$.

\begin{definition} \label{gluing-hom}
$W_{\mathcal{T}}$ has the the gluing property up to homotopy if it is assigned an equivalence class of homotopies $Y_{\mathcal{T}^1}$ between $W_{\mathcal{T}^1}$ and $W_{\mathcal{T}^{0,1}} \times W_{\mathcal{T}^{0,1}} $ such that the homotopy (\ref{YY-comp}) is invariant up to equivalence by the switch of the order of the marked edges.
\end{definition}

The following proposition proves that from a system of chains with the gluing property up to homotopy we can construct a system of chains with the gluing property in a canonical way.

\begin{proposition} \label{product-construction}
Let $W_{\mathcal{T}}$ be a system of chains with the gluing property up to homotopy (Definition \ref{gluing-hom}). There exists a system of chain $W_{\mathcal{T}}^0$ with the gluing property and an homotopy $Y_{\mathcal{T}}^0$ between $W_{\mathcal{T}}$ and $W_{\mathcal{T}}^0$ such that
\begin{equation} \label{YY0-comp}
(Y_{\mathcal{T}^{0,1}}^0 \times Y_{\mathcal{T}^{0,1}}^0) \circ Y_{\mathcal{T}^1} \sim Y_{\mathcal{T}^1}^0 
\end{equation}
\end{proposition}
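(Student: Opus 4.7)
My plan is to build $W^0_{\mathcal{T}}$ and $Y^0_{\mathcal{T}}$ by induction on the number of internal edges $n = |E(T)|$ of the decorated tree $T$. For the base case $n = 0$, the tree has a single vertex and no internal edge is available for gluing, so I take $W^0_T := W_T$ and let $Y^0_T$ be the constant homotopy. For the inductive step, fix $T$ with $n \geq 1$ internal edges and assume $W^0_{T'}$ and $Y^0_{T'}$ have been defined for all decorated trees $T'$ with strictly fewer internal edges. Choose an internal edge $e \in E(T)$, and let $(T_1, e_1), (T_2, e_2) \in \mathcal{T}^{0,1}$ be obtained by cutting $e$; both have fewer than $n$ internal edges. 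Using the component $Y_{T,e}$ of the given homotopy $Y_{\mathcal{T}^1}$, set
$$ W^0_T := W^0_{T_1} \times W^0_{T_2}, \qquad Y^0_T := (Y^0_{T_1} \times Y^0_{T_2}) \circ Y_{T,e} . $$

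The main obstacle is to show that this recipe is independent, up to equivalence of homotopies, of the edge $e$ chosen at each stage, so that the resulting $W^0_T$ is globally consistent and the various boundary faces $\partial_{e'} W^0_T$ for $e' \in E(T)$ fit together correctly. For two distinct internal edges $e, e' \in E(T)$, cutting in the order $(e, e')$ and in the order $(e', e)$ both yield the same triple product of single-vertex chains, and the two resulting homotopies from $W_T$ to this common product differ exactly by the $S_2$-symmetry built into Definition \ref{gluing-hom} after lifting $(T; e, e')$ to $\mathcal{T}^2$. For $n > 2$, any two orderings of $E(T)$ are connected by a sequence of adjacent transpositions, and each such transposition takes place in a localised two-edge region where the $\mathcal{T}^2$ invariance applies verbatim. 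I expect this coherence argument to be the technically most delicate step, since one must propagate the equivalences through the inductive layers and verify their compatibility with the stratification of $\partial C_T(L)$ into $\partial_e$ and $\partial_v$ faces.

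The remaining points are formal once the coherence is in hand. The strict gluing property $W^0_{\mathcal{T}^1} = W^0_{\mathcal{T}^{0,1}} \times W^0_{\mathcal{T}^{0,1}}$ is tautological from the definition $W^0_T = W^0_{T_1} \times W^0_{T_2}$ applied at the marked internal edge. The boundary identities (\ref{boundary-collection}) for $W^0_T$ and (\ref{boundary-homotopy}) for $Y^0_T$ follow by combining the inductive boundary formulas for $W^0_{T_i}$ and $Y^0_{T_i}$ with the Leibniz rule for boundaries of products of chains and the known boundary properties of $Y_{T,e}$. Finally, equation (\ref{YY0-comp}) holds by construction for the particular edge $e$ used to define $Y^0_T$, and the coherence argument promotes it to an equivalence for every internal edge.
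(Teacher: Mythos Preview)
Your proposal is correct and rests on the same mechanism as the paper's proof: build $Y^0$ from the composition $(Y^0_{\mathcal{T}^{0,1}} \times Y^0_{\mathcal{T}^{0,1}}) \circ Y_{\mathcal{T}^1}$ and invoke the $S_2$-invariance on $\mathcal{T}^2$ from Definition~\ref{gluing-hom} to show consistency.

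The one genuine organisational difference is the induction scheme. The paper filters by pairs $(\omega(A),k)$ with $k$ the number of external edges, and at each step constructs $Y^0$ on all of $\mathcal{T}_k(A)$ simultaneously by a single descent: since cutting an internal edge strictly lowers the energy $\omega$ of each piece, the composition (\ref{YY0-induction}) is already available from the inductive hypothesis, and its $S_2$-invariance on $\mathcal{T}^2_k(A)$ guarantees it is induced by some $Y^0_{\mathcal{T}_k(A)}$. You instead induct on the number of internal edges, make an explicit edge choice, and argue independence via adjacent transpositions. Your route is more concrete, but the paper's packet $\mathcal{T}_k(A)$ has the advantage of being closed under $T \mapsto T'$ with $T'/e = T$ (same external edges, same total class), so the term $\partial_v W^0_T$ in (\ref{boundary-collection})---which references exactly such $T'$---can be checked within the current inductive step. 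In your scheme $T'$ has one more internal edge than $T$, so that verification must be deferred until the next level; this is not a gap, since ultimately $W^0_T = \prod_{v} W_{T_v}$ is written down for all $T$ at once and the axioms checked globally, but your sentence that the boundary identities ``follow by combining the inductive boundary formulas \ldots\ with the Leibniz rule'' understates the bookkeeping needed there.
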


\begin{proof} Let $A \in H_2(M,L)$ and $k \in \Z^+$. Assume that the homotopy $Y^0$ has been constructed on $\mathcal{T}_l(B)$ if $ \omega(B) < \omega (A)$ or $B=A$ and $l < k$. From these data we can define the composition
\begin{equation} \label{YY0-induction}
(Y_{\mathcal{T}^{0,1}}^0 \times Y_{\mathcal{T}^{0,1}}^0)_{\mathcal{T}^1_k(A)} \circ Y_{\mathcal{T}^1_k(A)} .
\end{equation}
Observe that the equivalence class of the image on $\mathcal{T}^2_k(A)$ of (\ref{YY0-induction}) is invariant by the switch of the order of the decorated edges. It follows that there exists an homotopy $Y^0_{\mathcal{T}_k(A)}$ such that the image on $\mathcal{T}^1_k(A)$ is equivalent to (\ref{YY0-induction}).
\end{proof}

Observe that the $W^0_{\mathcal{T}}$ constructed in Proposition \ref{product-construction} is not unique. In each step of the inductive argument we have the freedom to choice a representative of $W_T$ where $T$ is the tree with $k$ external edges and only one vertex in the homology class $A$. It is clear that if $W^0$ and $W^1$ are two system of chains with the gluing property constructed in Proposition \ref{product-construction}, there exists an homotopy with the gluing property between $W^0$ and $W^1$. Moreover the equivalence class of this homotopy is uniquely determined.

\section{Open Gromov-Witten potential}

\subsection{Master Equation}
In the appendix we construct (after choicing some geometric data) the propagator $P \in \Omega^2(C_2(L))$ of the abelian Chern-Simons theory. Observe that the property (\ref{parity}) implies that, for each internal edge $e \in T$, $\pi_e^*(P)$ is a differential two-form with twisted coefficients in $o_e$.



Let $\alpha_i \in \Omega^*(L)$ be a basis of $\Psi$ as in the Appendix. Let $x_i$ be the coordinates on $H^*(L)[1]$ dual on the basis $\alpha_i$.

The differential form
\begin{equation} \label{psi}
\psi = \sum_i x_i \alpha_i \in \mathcal{O}(H^*(L)[1]) \otimes  \Omega^*(L).
\end{equation}
does not depend on the basis $\alpha_i$.

For each $T \in \mathcal{T}$ let $CS_T \in \mathcal{O}(H^*(L)[1]) \otimes  \Omega^*(C_T(L))$ the differential form defined by
\begin{equation} \label{differentialform}
CS_T = \bigwedge_{e \in E^{in}(T)} \pi_e^*(P) \wedge \bigwedge_{e \in E^{ex}(T)} \pi_e^*( \psi).
\end{equation}
Remember that for each $e \in E(T)$ we have an isomorphism $\partial_e C_T(L) \cong \partial C_2(L) \times C_{T/e}$. From (\ref{singularity2}) it follows that  
\begin{equation} \label{singularity3}
i^*_{\partial_e} ( CS_T )= \eta \wedge CS_{T/e} .
\end{equation}
We now prove that (\ref{singularity3}) implies that it is possible to define the integral of the collection of differential forms $CS_{\mathcal{T}}$ on a degenerate chain.

Let $W_{\mathcal{T}}$ be a degenerate chain and let $Y_{\mathcal{T}}$ be a germ of homotopies representing $W_{\mathcal{T}}$.
Define $ Y_T^{\epsilon} = Y_T \cap ( \{ \varepsilon \}  \times C_T(L) ) $. The transversality conditions of degenerate chains imply that $Y_T^{\varepsilon}$ converges as a current on $C_T(L)$. Therefore the limit $ \lim_{\varepsilon \rightarrow 0} \int_{Y_T^{\varepsilon}}  CS_T $ exists.

\begin{lemma} \label{homhom1}
For each $A \in H_2(M,L)$, the limit 
$$ \lim_{\varepsilon \rightarrow 0} \sum_{T \in \mathcal{T}(A)} \int_{Y_T^{\varepsilon}}  CS_T $$
does not depend on the germ of homotopy $Y_{\mathcal{T}}$ representing $W_{\mathcal{T}}$.
\end{lemma}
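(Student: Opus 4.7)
The plan is to fix a germ of equivalence $Z_{\mathcal{T}}$ between two germs of homotopies $Y_{\mathcal{T}}$ and $\widetilde{Y}_{\mathcal{T}}$ representing $W_{\mathcal{T}}$, and then run Stokes' theorem slice by slice in $\varepsilon$, using the singularity formula (\ref{singularity3}) to produce a vertex--edge cancellation.

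First I would fix $\varepsilon \in (0,\delta)$ and form $Z_T^{\varepsilon} = Z_T \cap ([0,1] \times \{\varepsilon\} \times L^{H(T)})$. The boundary compatibility of an equivalence of homotopies, applied at this positive $\varepsilon$, gives
$$\partial Z_T^{\varepsilon} = \{1\} \times \widetilde{Y}_T^{\varepsilon} - \{0\} \times Y_T^{\varepsilon} + \sum_v \partial_v Z_T^{\varepsilon} + \sum_e \partial_e Z_T^{\varepsilon},$$
with the face in the germ direction contributing nothing, since $Z_T$ vanishes as a current at $\{0\}$ by definition of a germ of equivalence. Since the abelian propagator $P$ is closed on $C_2(L)$ and the basis forms $\alpha_i$ are chosen closed in the Appendix, $CS_T$ is a smooth closed form on $C_T(L)$, so Stokes applied to $Z_T^{\varepsilon}$ yields
$$\int_{\widetilde{Y}_T^{\varepsilon}} CS_T - \int_{Y_T^{\varepsilon}} CS_T = -\sum_v \int_{\partial_v Z_T^{\varepsilon}} CS_T - \sum_e \int_{\partial_e Z_T^{\varepsilon}} CS_T.$$

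Summing over $T \in \mathcal{T}(A)$, I would then handle the edge sum using (\ref{singularity3}): because $\partial_e Z_T^{\varepsilon}$ is the geometric preimage of $\partial_e^0 Z_T^{\varepsilon}$ over the $S^2$-fibration and $\eta$ has fibre-integral one, each edge term equals $\int_{\partial_e^0 Z_T^{\varepsilon}} CS_{T/e}$. Reindexing by $T_0 = T/e$ and invoking the relation $\partial_v Z_{T_0} = \sum_{T'/e=(T_0,v)} \partial_e^0 Z_{T'}$ (the $Z$-analog of (\ref{boundary-collection})), the total edge contribution cancels the total vertex contribution for every $\varepsilon \in (0,\delta)$. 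Passing to the limit $\varepsilon \to 0$ then gives the asserted identity of limits.

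The main obstacle is the sign and orientation bookkeeping in the vertex--edge telescoping, together with verifying that $\partial_e Z_T^{\varepsilon}$ and $\partial_v Z_T^{\varepsilon}$ are honest chains for which the integrals above actually converge. The latter is where the transversality built into a germ of equivalence is used (transversality to the diagonals on $[0,1] \times (0,\delta)$ together with boundary compatibility); existence of each individual $\varepsilon \to 0$ limit reduces to the same transversality argument already given for $Y_T^{\varepsilon}$ immediately before the lemma.
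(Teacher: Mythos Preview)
Your overall architecture (slice at $\varepsilon$, apply Stokes to $Z_T^{\varepsilon}$, and let the edge terms cancel the vertex terms via the singularity identity~(\ref{singularity3})) is exactly what the paper does. However, there is a genuine error in the middle step.

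You assert that ``the abelian propagator $P$ is closed on $C_2(L)$'' and hence that $CS_T$ is closed. This is false: the Appendix states $dP = K$ (equation~(\ref{differential})), where $K = \sum_i \alpha_i \otimes \beta_i$ is the smooth kernel on $L\times L$; indeed this nonvanishing $dP$ is precisely what produces the bracket term in the master equation (Lemma~\ref{master-homotopy}). Consequently $dCS_T \neq 0$, and your Stokes identity is missing the term $\int_{Z_T^{\varepsilon}} dCS_T$. After the vertex--edge cancellation one is left with
\[
\sum_{T\in\mathcal{T}(A)} \int_{Y_T^{\varepsilon}} CS_T \;-\; \sum_{T\in\mathcal{T}(A)} \int_{\widetilde{Y}_T^{\varepsilon}} CS_T \;=\; \sum_{T\in\mathcal{T}(A)} \int_{Z_T^{\varepsilon}} dCS_T,
\]
and one still has to explain why the right-hand side tends to zero as $\varepsilon\to 0$.

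This is where the defining property of a germ of equivalence actually enters: $Z_T \cap ([0,1]\times\{0\}\times L^{H(T)})$ is zero as an $|E(T)|+1$ current, so $Z_T^{\varepsilon}\to 0$ as a current on $L^{H(T)}$. Since $dCS_T$ is built from the smooth form $K$ (no diagonal singularity), the pairing $\int_{Z_T^{\varepsilon}} dCS_T$ is well defined and tends to zero. Your remark that ``$Z_T$ vanishes as a current at $\{0\}$'' is the right ingredient, but you deployed it in the wrong place: at fixed $\varepsilon>0$ there is no ``face in the germ direction'' to worry about; the vanishing is needed instead to kill the residual $dCS_T$ integral in the limit.
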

\begin{proof}

Let $Y_{\mathcal{T}}  $ and $X_{\mathcal{T}} $ be two germs of homotopies representing $W_{\mathcal{T}}$. We need to prove that 
$$ \lim_{\varepsilon \rightarrow 0} \sum_{T \in \mathcal{T}(A)}  \int_{Y_T^{\varepsilon}} CS_T =  \lim_{\varepsilon \rightarrow 0} \sum_{T \in \mathcal{T}(A)}   \int_{X_T^{\varepsilon}} CS_T .$$

Equation (\ref{boundary-equivalence}) implies
$$  \partial Z_T^{\varepsilon} = Y_T^{\varepsilon} - X_T^{\varepsilon} + \sum_{v \in V(T)} \partial_v Z_T^{\varepsilon} + \sum_{e\in E(T)} \partial_e Z_T^{\varepsilon} .$$
By Stokes theorem
\begin{equation} \label{stokes}
\int_{Y_T^{\varepsilon}} CS_T- \int_{X_T^{\varepsilon}} CS_T = \int_{Z_T^{\varepsilon}} d CS_T - \sum_{v \in V(T)} \int_{\partial_v Z_T^{\varepsilon} } CS_T- \sum_{e\in E(T)} \int_{\partial_e Z_T^{\varepsilon}} CS_T.
\end{equation}
We also have
\begin{equation} \label{cancelchain2}
\partial_v Z_T^{\varepsilon} = \sum_{T'/e=(T,v)} \partial_e^0 Z_{T'}^{\varepsilon}
\end{equation}
where the sum over all the tree $T'$ and edges $e \in T'$ such that $T'/e \cong (T,v)$.
Formula (\ref{cancelchain2}) and (\ref{singularity3}) imply that in the sum over all the trees of (\ref{stokes}) the last two terms of (\ref{stokes}) cancel. Therefore
$$ \sum_T  \int_{Y_T^{\varepsilon}}  CS_T - \sum_T \int_{X_T^{\varepsilon}} CS_T = \sum_T \int_{Z_T^{\varepsilon}} d CS_T.$$
The lemma follows since
$$ \lim_{\varepsilon \rightarrow 0} \int_{Z_T^{\varepsilon}} d CS_T =0     $$
because $Z_T^{\varepsilon} \rightarrow 0$ as current on $C_T(L)$.
\end{proof}
Denote by
$$ \int_{W_{\mathcal{T}(A)}} CS_{\mathcal{T}(A)} $$ 
the limit in Lemma \ref{homhom1}.

Suppose now that $W_{\mathcal{T}}$ is a system of chain with the gluing property. 
The effective action (with coefficients in the Novikov ring) is defined by
\begin{equation} \label{action}
S = \sum_{A} S(A) T^{\omega(A)}.
\end{equation}
where $S(A) \in \mathcal{O}(H^*(L))$ is given by
\begin{equation} \label{actionA}
S(A) = \int_{W_{\mathcal{T}(A)}} CS_{\mathcal{T}(A)} 
\end{equation}

Analogously let $Y_{\mathcal{T}}$ be an homotopy of chains with the gluing property.
Let $\pi : Y_{\mathcal{T}} \rightarrow [0,1]$ be the natural projection.
The extended effective action $\tilde{S}$ is defined by
\begin{equation} \label{actionfam}
\tilde{S} = \sum_{A} \tilde{S}(A) T^{\omega(A)}.
\end{equation}
where $\tilde{S}(A)  \in \Omega^*([0,1])  \otimes \mathcal{O}(H^*(L)) $ is given by
\begin{equation} \label{actionAfam}
\tilde{S}(A)=   \pi_* (CS_{\mathcal{T}(A)} ) 
\end{equation}

We have the 
\begin{lemma} \label{master-homotopy}
$\tilde{S}$ is an homotopy of master solutions:
$$d \tilde{S} +\frac{1}{2} \{ \tilde{S}, \tilde{S} \} =0.$$
\end{lemma}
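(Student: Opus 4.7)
The plan is to apply the fiber-integration Stokes formula to the definition $\tilde{S}(A) = \pi_*(CS_{\mathcal{T}(A)})$, then to analyze the resulting boundary integrals via (\ref{boundary-homotopy}), the singularity relation (\ref{singularity3}), and the gluing property, identifying the resulting contributions with $-\tfrac{1}{2}\{\tilde{S},\tilde{S}\}$.

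For each tree $T$, the fiber-integration Stokes formula reads
\[
d\,\pi_*(CS_T) = \pi_*(dCS_T) \pm \pi_*^\partial(CS_T),
\]
where $\pi_*^\partial$ denotes integration over the fiber boundary of $\pi: Y_T \to [0,1]$. The interior term vanishes: by the construction in the Appendix, $P$ is closed on the interior of $C_2(L)$, and the basis $\alpha_i$ of $\Psi$ consists of closed forms, so $d\psi = 0$ and hence $dCS_T = 0$ away from $\partial C_T(L)$. Over $t \in (0,1)$ the fiber boundary splits by (\ref{boundary-homotopy}) as $\sum_v \partial_v Y_T + \sum_e \partial_e Y_T$; the endpoint slices at $t=0,1$ do not enter $d\tilde{S}$ on the interior.

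Rewrite each edge contribution by (\ref{singularity3}) as $\int_{\partial_e Y_T} CS_T = \int_{\partial_e^0 Y_T} CS_{T/e}$ after integrating $\eta$ over the $S^2$ fiber, and reindex the vertex contributions using $\partial_v Y_T = \sum_{T'/e = (T,v)} \partial_e^0 Y_{T'}$ into a sum over larger trees with marked internal edges. The essential new input, not used in Lemma \ref{homhom1}, is the gluing property: for a tree $T'$ with a marked internal edge $e$, the chain $Y_{T'}$ factors as a product of chains on the two subtrees obtained by cutting $e$, via $Y_{\mathcal{T}^1} = Y_{\mathcal{T}^{0,1}} \times Y_{\mathcal{T}^{0,1}}$. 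Under this factorization the integral $\int_{\partial_e^0 Y_{T'}} CS_{T'/e}$ becomes a product of two tree integrals, paired across the collapsed edge by the residual contribution of $\eta$, which is the Poincar\'e pairing on $L$ and therefore defines the Poisson bracket on $\mathcal{O}(H^*(L))$ dual to the propagator. Summing over all pairs of subtrees, with the $\Z_2$-quotient in $\mathcal{T}^1 = (\mathcal{T}^{0,1} \times \mathcal{T}^{0,1})/\Z_2$ producing the factor $\tfrac{1}{2}$, assembles the contributions into $\tfrac{1}{2}\{\tilde{S},\tilde{S}\}$.

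The main obstacle is precisely this last identification. One must show that the vertex and edge boundary contributions, which would cancel by (\ref{singularity3}) alone (as in the proof of Lemma \ref{homhom1}), instead reorganize under the gluing factorization so that their combined sum equals $-\tfrac{1}{2}\{\tilde{S},\tilde{S}\}$ rather than zero. This requires careful sign and combinatorial bookkeeping, matching the sum over $(T,e)$ against the pair sum $\mathcal{T}^{0,1} \times \mathcal{T}^{0,1}$, and identifying the $\eta$-residual after fiber integration with the symplectic pairing defining the Poisson structure on $\mathcal{O}(H^*(L))$.
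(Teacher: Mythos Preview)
Your argument contains a genuine error at the very first step: the claim that ``$P$ is closed on the interior of $C_2(L)$'' is false. Equation~(\ref{differential}) in the Appendix states $dP = K$ with $K = \sum_i \alpha_i \otimes \beta_i \neq 0$, and this is precisely the formula the paper's one-line proof invokes. Consequently $dCS_T$ does \emph{not} vanish on the interior; differentiating (\ref{differentialform}) produces a sum over internal edges $e$ in which the factor $\pi_e^*(P)$ is replaced by $\pi_e^*(K)$. By the gluing property the chain $W_T$ (resp.\ $Y_T$) factors along~$e$, and the insertion of $K=\sum_i \alpha_i\otimes\beta_i$ splits the integral into a sum over~$i$ of products of the two subtree integrals, each now carrying an extra $\alpha_i$ or $\beta_i$ on the new external edge. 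This is exactly $\tfrac12\{\tilde S,\tilde S\}$, with the $\tfrac12$ coming from $\mathcal{T}^1=(\mathcal{T}^{0,1}\times\mathcal{T}^{0,1})/\Z_2$.

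Correspondingly, your treatment of the boundary terms is inverted. The contributions from $\sum_e\partial_e Y_T$ and $\sum_v\partial_v Y_T$ cancel \emph{exactly} as in the proof of Lemma~\ref{homhom1}; the gluing property does not disturb that cancellation, because it is a purely combinatorial matching of $(T',e)$ with $T'/e=(T,v)$ together with (\ref{singularity3}). Your assertion that integrating $\eta$ over the $S^2$-fiber leaves a ``residual Poincar\'e pairing'' is also incorrect: that fiber integral is simply~$1$, so $\int_{\partial_e Y_T}CS_T=\int_{\partial_e^0 Y_T}CS_{T/e}$ with no extra pairing attached. In short, the Poisson bracket comes from the interior term $\pi_*(dCS_T)$ via $dP=K$ and the gluing property, while the boundary faces contribute nothing; you have reversed the roles of the two.
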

\begin{proof}
The lemma follows directly from the gluing property and formula (\ref{differential})
\end{proof}

We need to consider the dependence of $S$ on the data of that we used to construct to propagator $P$ (see Appendix). Using the argument of \cite{I} it follows that two different data lead to master homotopic solutions. Here the point is to construct the extended propagator for a family of data and use it to define the extended potential as above.  

\subsection{The potential}


Let $W_{\mathcal{T}}$ the system of chain associated to a coherent pertubation in the boundary constructed in \cite{I1}. Recall that two different perturbations lead to homotopic $ W_{\mathcal{T}} $ with the homotopy determined up to equivalence. 

\begin{lemma}
$W_{\mathcal{T}}$ has the gluing property up to homotopy.
\end{lemma}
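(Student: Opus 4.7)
The system $W_{\mathcal{T}}$ from \cite{I1} is built from coherent perturbations of the moduli spaces of multi-disks organized by decorated trees. Coherence is precisely the statement that at the codimension-one boundary face $\partial_e C_T(L)$ corresponding to the contraction of an internal edge $e$, the restriction of $W_T$ equals the geometric preimage, under the $S^2$-fibration (\ref{chain-fibration}), of the fibered product of the chains $W_{T_1}$ and $W_{T_2}$ attached to the two subtrees obtained by cutting $e$. The lemma asks us to upgrade this boundary identity to a globally defined equivalence class of homotopies $Y_{\mathcal{T}^1}$ between $W_{\mathcal{T}^1}$ and $W_{\mathcal{T}^{0,1}} \times W_{\mathcal{T}^{0,1}}$ that is compatible with the $\mathbb{Z}_2$-action on $\mathcal{T}^2$ in the sense of Definition \ref{gluing-hom}.

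The plan is to construct $Y_{\mathcal{T}^1}$ by induction on the pair $(\omega(A),k)$, ordered lexicographically, where $A\in H_2(M,L)$ and $k$ is the number of external edges, mirroring the strategy of the proof of Proposition \ref{product-construction}. The base step is trivial: when $T$ has no internal edges, $W_{\mathcal{T}^1}$ is empty. For the inductive step, the coherence of the perturbation implies that the two chains $W_{\mathcal{T}^1}$ and $W_{\mathcal{T}^{0,1}} \times W_{\mathcal{T}^{0,1}}$ agree as germs at the face $\partial_e C_T(L)$ of the marked edge, and their boundaries match, by inductive hypothesis, on the remaining faces. Passing to a collar neighborhood and applying the standard cylinder construction then produces an interpolating chain $Y_T$ on $[0,1]\times C_T(L)$ satisfying condition $(B)$ and the boundary identity (\ref{boundary-homotopy}).

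The main content is the $\mathbb{Z}_2$-invariance of the composition (\ref{YY-comp}) on $\mathcal{T}^2$. For a tree with two marked internal edges $e_1,e_2$, the codimension-two stratum where both edges are contracted admits two natural product decompositions corresponding to contracting $e_1$ first or $e_2$ first. Because the two $S^2$-fibrations at these faces are independent, coherence in \cite{I1} yields the same triple product $W_{\mathcal{T}^{0,1}} \times W_{\mathcal{T}^{0,2}} \times W_{\mathcal{T}^{0,1}}$ up to canonical identification. The inductive hypothesis supplies the required equivalence between the two compositions of homotopies, so that the freedom in extending $Y_T$ across the interior of $C_T(L)$ can be used to realize the symmetry up to equivalence at each stage.

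The hard part will be the bookkeeping at the intersection of several boundary strata. One must guarantee simultaneously that $Y_T$ is transverse to every diagonal $\Delta_{e'}$ (as demanded by condition $(B)$), that its germ at $\{0\}\times C_T(L)$ reproduces the degenerate chain $W_T$, and that modifying the interior extension at step $(A,k)$ does not break the symmetry established at lower stages. As in \cite{I1}, this is arranged by perturbing only in a neighborhood of the interior of $C_T(L)$, away from the faces where coherence has already been imposed; the equivalence class of the resulting homotopy is then uniquely determined by the inductive data, which ensures compatibility with the switch of the two marked edges.
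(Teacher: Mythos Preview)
Your argument stays at the level of chains in $C_T(L)$ and never goes back to the moduli spaces $\overline{\mathcal{M}}_T$; this is where the gap is. The paper's proof is different in kind: it builds a one-parameter family of perturbations $\tilde{s}_{\mathcal{T}^1}$ on $\overline{\mathcal{M}}_{\mathcal{T}^1}\times[0,1]$ interpolating between the induced coherent perturbation $s_{\mathcal{T}^1}$ at $t=0$ and the pullback, along the cutting map $\overline{\mathcal{M}}_{\mathcal{T}^1}\to(\overline{\mathcal{M}}_{\mathcal{T}^{0,1}}\times\overline{\mathcal{M}}_{\mathcal{T}^{0,1}})/\Z_2$, of the product perturbation $s_{\mathcal{T}^{0,1}}\times s_{\mathcal{T}^{0,1}}$ at $t=1$. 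The homotopy $Y_{\mathcal{T}^1}$ is then the evaluation image of the zero locus of $\tilde{s}_{\mathcal{T}^1}$. The reason the homotopy exists at all is that $W_{\mathcal{T}^1}$ and $W_{\mathcal{T}^{0,1}}\times W_{\mathcal{T}^{0,1}}$ are virtual chains of the \emph{same} Kuranishi space computed with two different multisections, and two multisections of the same obstruction bundle can always be joined by a path of sections (with transversality arranged generically on $(0,1]$, coherently in the boundary). Nothing purely internal to $C_T(L)$ supplies this input.

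Your reading of coherence is also off. The face $\partial_e C_T(L)$ records the \emph{contraction} of $e$ (the two half-edges of $e$ collide), and condition~(B) identifies $\partial_e W_T$ with the $S^2$-preimage of a chain on $C_{T/e}(L)$ for the single tree $T/e$ with one fewer vertex. The gluing property, by contrast, is about \emph{cutting} $e$ into two external edges and asks whether $W_{(T,e)}$ agrees with $W_{T_1}\times W_{T_2}$ as a chain on all of $C_T(L)$, not on a boundary stratum. So the assertion that the two chains ``agree as germs at the face $\partial_e C_T(L)$ of the marked edge'' is not what coherence gives you, and even if it were, matching on one codimension-one face would not by itself produce a homotopy across the interior; the ``standard cylinder construction'' you invoke has no input to act on.
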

\begin{proof}

Let $ s_{\mathcal{T}^1} $ and $s_{\mathcal{T}^{0,1}} $ be the pertubations induced by $ s_{\mathcal{T} }$ on $\overline{\mathcal{M}}_{\mathcal{T}^1}(J) $ and $\overline{\mathcal{M}}_{\mathcal{T}^{0,1}}(J) $ respectively. 
We have a natural map
\begin{equation} \label{cutedge3}
\overline{\mathcal{M}}_{\mathcal{T}^1}  \rightarrow ( \overline{\mathcal{M}}_{\mathcal{T}^{0,1}} \times \overline{\mathcal{M}}_{\mathcal{T}^{0,1}} ) / \Z_2
\end{equation}
$ s_{\mathcal{T}^{0,1}} \times s_{\mathcal{T}^{0,1}} $ is a perturbation of the left side of (\ref{cutedge3}). Its pull-back on the right side of (\ref{cutedge3}) is a section of the obstruction bundle of $\overline{\mathcal{M}}_{\mathcal{T}^1} $ that is not transverse in the boundary. There exists a section $\tilde{s}_{\mathcal{T}^1}$ on the obstruction bundle of $\overline{\mathcal{M}}_{\mathcal{T}^1} \times [0,1]$ such that $\tilde{s}_{\mathcal{T}^1}$ 
\begin{itemize}
\item is coherent in the boundary. 
\item restricted to $\overline{\mathcal{M}}_{\mathcal{T}} \times \{ 1 \}$ is equal to $s_{\mathcal{T}^{0,1}} \times s_{\mathcal{T}^{0,1}}$
\item restricted to $\overline{\mathcal{M}}_{\mathcal{T}} \times \{ 0 \}$ is equal to $ s_{\mathcal{T}^1} $
\item is transverse to the zero section outside $\overline{\mathcal{M}}_{\mathcal{T}} \times \{ 0 \}$. 
\end{itemize}
From $\tilde{s}_{\mathcal{T}^1}$ we can construct the homotopy $Y_{\mathcal{T}^1}$ of Definition \ref{gluing-hom}. 
\end{proof}

Let $W_{\mathcal{T}}^0$ be a system of chains constructed in Lemma \ref{product-construction} from the $W_{\mathcal{T}}$ above. As observed after Lemma \ref{product-construction}, two different solutions for $W_{\mathcal{T}}^0$ are connected by an homotopy uniquely determined up to equivalence. We use $W_{\mathcal{T}}^0$ in formula (\ref{action}) to define the Open Gromov-Witten potential $S$.

\begin{proposition}
The Gromov-Witten potential $S$ depends on some choice. To two different choices corresponds a master homotopy (determined up to equivalence) between the associated potentials.
\end{proposition}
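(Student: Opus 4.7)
The plan is to separate the choices that go into $S$ and handle each by constructing an appropriate homotopy at the level of systems of chains with the gluing property, then push it through the construction of $\tilde S$ in Lemma~\ref{master-homotopy}.

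First I would enumerate the data: $S$ depends on (i) the coherent perturbation $s_{\mathcal T}$ used to produce $W_{\mathcal T}$ in the previous lemma, (ii) the inductive choice of lift $W^0_{\mathcal T}$ from Proposition~\ref{product-construction}, and (iii) the geometric data defining the propagator $P$. Given two packages of choices, say $(s_0,W^{0,0}_{\mathcal T},P_0)$ and $(s_1,W^{0,1}_{\mathcal T},P_1)$, the goal is to produce an extended propagator $\tilde P$ on $[0,1]\times C_2(L)$ and a homotopy $Y^0_{\mathcal T}$ between $W^{0,0}_{\mathcal T}$ and $W^{0,1}_{\mathcal T}$ which itself has the gluing property (in the family-over-$[0,1]$ sense). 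For the perturbation choice, \cite{I1} supplies a homotopy $Y_{\mathcal T}$ between $W^0_{\mathcal T}$ and $W^1_{\mathcal T}$ unique up to equivalence; a one-parameter version of the preceding lemma gives gluing-up-to-homotopy for $Y_{\mathcal T}$, and a relative version of Proposition~\ref{product-construction} (run over the cylinder, with the endpoints constrained to equal $W^{0,0}_{\mathcal T}$ and $W^{0,1}_{\mathcal T}$) produces $Y^0_{\mathcal T}$ with the gluing property. For the propagator data, the argument sketched at the end of Section~3.1, importing the family-propagator construction of \cite{I}, builds $\tilde P$ on $[0,1]\times C_2(L)$ with $\tilde P|_{i}=P_i$ and having the same singularity behavior (\ref{singularity2}) on the fibered boundary.

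Next I would feed $(\tilde P, Y^0_{\mathcal T})$ into formulas (\ref{differentialform})--(\ref{actionAfam}) to define the extended action $\tilde S$ on $[0,1]$. The same Stokes-theorem calculation as in Lemma~\ref{homhom1}, now applied to the family and using that $Y^0_{\mathcal T}$ has the gluing property, together with the family analogue of (\ref{singularity3}), shows that $\tilde S$ satisfies $d\tilde S+\tfrac12\{\tilde S,\tilde S\}=0$; this is exactly the content of Lemma~\ref{master-homotopy} transcribed to the present setting. Evaluation at the endpoints recovers $S_0$ and $S_1$, so $\tilde S$ is the desired master homotopy.

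For the uniqueness statement, suppose $\tilde S^a$ and $\tilde S^b$ are two master homotopies obtained from two different choices of intermediate data $(\tilde P^a,Y^{0,a}_{\mathcal T})$ and $(\tilde P^b,Y^{0,b}_{\mathcal T})$. The uniqueness-up-to-equivalence clauses in Proposition~\ref{product-construction} and in the lemma just above produce an equivalence between $Y^{0,a}_{\mathcal T}$ and $Y^{0,b}_{\mathcal T}$, i.e.\ a chain $Z^0_{\mathcal T}$ on $[0,1]^2\times L^{H(T)}$ with the gluing property in the family sense; similarly the family-of-families argument of \cite{I} produces an extension of the propagator to $[0,1]^2\times C_2(L)$. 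Feeding this into (\ref{actionAfam}) one dimension higher gives a two-parameter effective action $\tilde{\tilde S}$ whose restrictions to the boundary of $[0,1]^2$ recover $\tilde S^a$, $\tilde S^b$ and $S_0$, $S_1$, i.e.\ the sought equivalence of master homotopies.

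The main obstacle I anticipate is keeping the gluing property preserved through the relative version of Proposition~\ref{product-construction}: the inductive step there uses the $S_2$-symmetry on $\mathcal T^2$ to find a representative for the corner, and one must check that the same symmetry argument goes through when one marks the $[0,1]$-direction and fixes boundary data at both ends (and a fortiori over $\partial[0,1]^2$ for the uniqueness step). Once this is verified, the rest is a clean transcription of Lemma~\ref{homhom1} and Lemma~\ref{master-homotopy} to the family setting.
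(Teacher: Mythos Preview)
Your proposal is correct and follows essentially the same route as the paper: the paper's proof is a one-line appeal to ``the observations above and Lemma~\ref{master-homotopy},'' where those observations are precisely the existence and uniqueness-up-to-equivalence of homotopies with the gluing property between different choices of $W^0_{\mathcal T}$ (remark after Proposition~\ref{product-construction}), the family-propagator argument from \cite{I} for the dependence on $P$, and the homotopy between perturbations from \cite{I1}. You have simply unpacked these observations explicitly and noted the relative version of Proposition~\ref{product-construction} that the paper takes for granted.
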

\begin{proof}
The Proposition follows from the observations above and Lemma \ref{master-homotopy}.
\end{proof}

\subsection{Enumerative invariants}

An element $\sum_E x_E T^{E}$ is a critical point of $S$ if
\begin{equation} \label{critical}
(\partial_x S)(\sum_E x_E T^{E}) =0
\end{equation}
where the identity (\ref{critical}) has to be expanded as a formal series in $T$.

\begin{lemma} The value of $S$ on its critical points is an invariant.
\end{lemma}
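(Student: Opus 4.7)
The plan is to exploit the master homotopy interpolating between two choices of potential, together with the fact that the odd Poisson bracket $\{\alpha, S\}$ vanishes identically at a critical point of $S$. Let $S_0$ and $S_1$ be two potentials arising from different choices; by the preceding Proposition and Lemma \ref{master-homotopy} there is a master homotopy $\tilde{S} \in \Omega^*([0,1]) \otimes \mathcal{O}(H^*(L))$ with $d \tilde{S} + \tfrac{1}{2}\{\tilde{S},\tilde{S}\} = 0$ joining them. Write $\tilde{S} = S(t) + dt\, \alpha(t)$, with $S(t), \alpha(t) \in \mathcal{O}(H^*(L))$. Expanding the master equation in the $dt$-grading yields the pair
$$\{S(t), S(t)\} = 0, \qquad \partial_t S(t) + \{\alpha(t), S(t)\} = 0.$$
The first identity says that each $S(t)$ is itself a master solution; the second is the infinitesimal invariance to be exploited.

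Next, starting from a critical point $x(0) = \sum_E x_E(0)\, T^{\omega(E)}$ of $S_0$, I would build a family $x(t)$ of critical points of $S(t)$ inductively in the Novikov filtration. At each order $T^{\omega(E)}$ the critical-point condition $(\partial_x S(t))(x(t)) = 0$ reduces to a formal ODE for $\dot{x}_E(t)$ whose coefficients are already determined at lower orders, and can therefore be integrated step by step. Given the family $x(t)$, the key computation is
$$\frac{d}{dt}\, S(t, x(t)) = (\partial_t S)(t, x(t)) + \sum_i (\partial_{x^i} S)(t, x(t))\, \dot{x}^i(t).$$
The sum vanishes because $x(t)$ is critical for $S(t)$. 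The remaining term equals $-\{\alpha, S\}(x(t))$, and since the odd Poisson bracket on $H^*(L)[1]$ takes the form $\{F, G\} = \sum g^{ij}\, \partial_i F\, \partial_j G$ with $g^{ij}$ coming from Poincar\'e duality on $L$, each summand contains a factor $\partial_j S$ and so vanishes at the critical point $x(t)$. Therefore $S(t, x(t))$ is constant in $t$, giving $S_0(x(0)) = S_1(x(1))$.

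The main obstacle is the inductive construction of $x(t)$: at each Novikov order one must check both that the obstruction to integrating the ODE vanishes and that different admissible integrations produce critical points on which $S$ takes the same value. The consistency of the ODE at each order is obtained by differentiating the critical-point equation in $t$ and substituting the homotopy identity $\partial_t S = -\{\alpha, S\}$, which forces the obstruction to be $\partial_x$ of a quantity already known to vanish. Any residual ambiguity in $\dot{x}(t)$ lies in directions tangent to the critical locus, and the same Poisson-bracket computation as above shows such ambiguity does not affect $S(t, x(t))$. The final statement then says that the value on a critical point is a well-defined function of the equivalence class of the potential under master homotopy.
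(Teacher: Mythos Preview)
Your proposal is correct and follows essentially the same route as the paper: decompose the master homotopy as $\tilde{S}=S_t+B_t\,dt$, extract $\partial_t S_t+\{S_t,B_t\}=0$, and differentiate $S_t(x_t)$ along a family of critical points, using that both the chain-rule term and the bracket term contain a factor $\partial_x S_t$. The paper simply \emph{assumes} the existence of a smooth family $x_t$ of critical points, whereas you supply an inductive construction in the Novikov filtration and discuss the obstruction and ambiguity; this extra care is welcome but does not change the underlying argument.
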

\begin{proof}
Assume that $\tilde{S}$ is a solution of the master homotopy equation as in Lemma \ref{master-homotopy}. Write $ \tilde{S} = S_t + B_t dt  $. The equation $d \tilde{S} + \{ \tilde{S}, \tilde{S} \} =0$ is equivalent to
\begin{equation} \label{master-homotopy2}
\frac{d}{dt} S_t + \{ S_t, B_t \} =0.
\end{equation}
Let $ x_t $ be one parameter smooth family of elements of $\mathcal{O}(H) \otimes \Lambda $ such that $x_t$ is a critical points of $S_t$ for each $t$. By equation (\ref{master-homotopy2}) $ S_t(x_t) =0 $.
Therefore
$$\frac{d}{dt} (S_t(x_t))= (\frac{d}{dt} S_t) (x_t) +  \langle \partial_x S_t , \frac{d}{dt} x_t \rangle =  0.$$
\end{proof}

\section{Higher genus}

We use the same notation of the last section of \cite{I1}.

We can apply the same argument of before to construct a system of chains $W_{\mathcal{G}'}^0$ with the gluing property. Define the effective action $S_{(g,h)}(A) \in \mathcal{O}(H^*(L))$ by
\begin{equation} \label{higher-actionA}
S_{(g,h)}(A) = \int_{W_{\mathcal{G}_{(g,h)}(A)}} CS_{\mathcal{G}} 
\end{equation}

The total action (with coefficients in the Novikov ring and string coupling $\lambda$) is given by
\begin{equation} \label{higher-action}
S = \sum_{A,g,h} \lambda^{2g-2 +h} S_{(g,h)}(A) T^{\omega(A)}.
\end{equation}
The one parameter version $\tilde{S}$ satisfies the quantum master equation
$$d \tilde{S} + \lambda \Delta \tilde{S} + \frac{1}{2} \{ \tilde{S}, \tilde{S} \} =0.$$


\appendix

\section{Abelian Chern-Simons}

Let $C_2(L)$ the geometric blow up along the diagonal of $L^2$ (see \cite{AS}). $C_2(L)$ is a manifold with boundary. The boundary $\partial C_2(L)$ is isomorphic to the sphere normal bundle of the diagonal $\Delta$ of $L \times L$.

We will define the propagator as a differential form of degree two on $C_2(L)$. Our construction depends by the following data
\begin{itemize}
\item a metric on $M$

\item a connection on $TM$ compatible with the metric

\item 
A subspace $\Psi \subset \Omega^*(L)$ of closed differential form such that the natural projection
$$ \Psi \rightarrow   H^*(L)$$
is an isomorphism.
\end{itemize}

Let $\alpha_i \in \Omega(L)$, $\beta_i \in \Omega(L)$ be basis of $\Psi$ such that $\int \alpha_i \wedge \beta_j = \delta_{ij}$.


Define $K \in \Omega^3(L \times L) $ by
\begin{equation} \label{kappa}
K= \sum_i \alpha_i \otimes \beta_i
\end{equation}

The differential form $K$ does not depend by the basis $\alpha_i, \beta_i$.

\subsection{Propagator} Fix an orthogonal frame of $TL$ on an small open subset $U \subset L$. On $U$ the $S(TL)$ is a trivial bundle with fiber $ S^2 $.
Denote by $\theta_i$ the $1$-form components of the connection in this local system. Consider the differential form of the spherical bundle $S(TU)$
\begin{equation} \label{singularity}
\frac{\omega + d(\theta^i x_i)}{ 4 \pi}
\end{equation}
where $\omega$ is the standard volume form of $S^2$ and $x_i$ are the restriction to $S^2$ of the standard coordinates of $\R^3$. The differential form (\ref{singularity}) is independent by the choice of the local frame of $TU$.
It follows that there exists a globally defined differential form $ \eta \in \Omega^2(S(TL))$ such that $\eta$ agree with (\ref{singularity}) for each local frame.

Denote by $\pi_{\partial} : \partial C_2(L) \rightarrow L $ the natural projection. Let $i_{\partial}: \partial C_2(L) \rightarrow C_2(L)$ be the
inclusion.
Let $r : L \times L \rightarrow L \times L$ be the reflection on the diagonal $r(x,y)=(y,x)$. The map $r$ induces a map on $C_2(L)$ that we still denote by $r$.

\begin{lemma}
There exists a differential form $P \in \Omega^2(C_2(M))$ such that
\begin{equation} \label{singularity2}
i_{\partial}^*  P =  \eta
\end{equation}
\begin{equation} \label{differential}
d P = K
\end{equation}
\begin{equation} \label{parity}
r^*  P = - P
\end{equation}
and
\begin{equation} \label{contraction}
\langle P, \alpha_1 \otimes \alpha_2 \rangle =0
\end{equation}
for each $ \alpha_1,\alpha_2 \in \Psi$. 

If $P' \in \Omega^2(C_2(M))$  is another differential form such that (\ref{singularity2}), (\ref{differential}), (\ref{contraction}) and (\ref{parity}) hold, then there exist $\phi \in \Omega^1(C_2(M))$ such that $P-P'= d \phi$ and $i_{\partial}^* \phi=0$.

\end{lemma}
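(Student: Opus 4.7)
The plan is to construct $P$ in three stages---boundary value, antisymmetry, orthogonality---and then deduce uniqueness by a relative de Rham argument. First, I would extend $\eta$ to a $2$-form $\tilde\eta\in\Omega^2(C_2(L))$ using a collar of $\partial C_2(L)$ and a cutoff. The closed $3$-form $\omega:=K-d\tilde\eta$ has vanishing pullback to $\partial C_2(L)$: $d\eta=0$ from the local formula (\ref{singularity}) since the Euler form of the odd-rank bundle $TL$ vanishes on the nose, and $\Delta^*K=0$ with the natural sign convention in $K$ (which is also what forces $r^*K=-K$). Thus $\omega$ represents a class $[\omega]\in H^3(C_2(L),\partial C_2(L))$ whose image in $H^3(C_2(L))$ is $[K]=PD[\Delta]$, and this class vanishes in $H^3(C_2(L))\cong H^3(L^2\setminus\Delta)$ by the Thom/excision sequence for $(L^2,L^2\setminus\Delta)$. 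The residual obstruction from the long exact sequence of the pair lies in $H^2(\partial C_2(L))$; I would kill it by a suitable adjustment of $\tilde\eta$ (for instance, using a Thom-form-type extension of $\eta$ that satisfies $d\tilde\eta=0$ in a neighborhood of $\partial C_2(L)$). Then solve $d\sigma=\omega$ with $\sigma|_\partial=0$ and set $P_0:=\tilde\eta+\sigma$, giving (\ref{singularity2}) and (\ref{differential}).

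Next I would symmetrize: $P_1:=(P_0-r^*P_0)/2$. Since $r^*K=-K$ and $r^*\eta=-\eta$ (the antipodal map on the $S^2$-fibers of $S(TL)$ reverses orientation), $P_1$ retains (\ref{singularity2}) and (\ref{differential}) and gains (\ref{parity}). To enforce (\ref{contraction}), observe that the pairings $c_{ij}:=\langle P_1,\alpha_i\otimes\alpha_j\rangle$ form a finite system, since only degrees summing to $4$ contribute. I would construct closed, $r$-antisymmetric $2$-forms $\phi_{ij}\in\Omega^2(C_2(L))$ vanishing on $\partial C_2(L)$ whose pairings against $\Psi\otimes\Psi$ form the dual basis, and set $P:=P_1-\sum_{ij}c_{ij}\phi_{ij}$; this satisfies all four conditions simultaneously.

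For uniqueness, if $P$ and $P'$ both satisfy (\ref{singularity2})--(\ref{contraction}), then $P-P'$ is closed, vanishes on $\partial C_2(L)$, is antisymmetric under $r$, and pairs trivially with $\Psi\otimes\Psi$. A relative Poincar\'e lemma provides $\phi\in\Omega^1(C_2(L))$ with $d\phi=P-P'$ and $i_\partial^*\phi=0$. The principal difficulty is producing the dual forms $\phi_{ij}$ in the orthogonality step, since four properties (closed, $r$-antisymmetric, boundary-vanishing, pairing non-degenerately with $\Psi\otimes\Psi$) must hold simultaneously. Verifying the required non-degeneracy reduces, by Poincar\'e--Lefschetz duality on $C_2(L)$ and the Kunneth decomposition of $H^*(L^2\setminus\Delta)$, to a finite-dimensional check---this is where the topology of the blow-up is genuinely exploited.
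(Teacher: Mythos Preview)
Your construction is correct in outline and close to the paper's, but the paper takes a shorter path by working on $L\times L$ rather than in the relative cohomology of $(C_2(L),\partial C_2(L))$. After setting $P=\rho\,\pi_U^*\eta$ via a cutoff, the paper notes that $P$ is \emph{closed} near the boundary, so $dP$ descends to a closed $3$-form on $L\times L$; a single Stokes computation $\int_{L^2}dP\wedge\tau=\int_{\partial C_2(L)}\eta\wedge\tau=\int_\Delta\tau$ for closed $\tau$ shows $[dP]=[K]$ in $H^3(L\times L)$, and one simply adds a primitive $\phi\in\Omega^2(L\times L)$ of $K-dP$. This replaces your long-exact-sequence and Thom argument entirely. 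The paper then fixes (\ref{contraction}) by adding a closed form on $L^2$ and imposes (\ref{parity}) \emph{last}; with that ordering your ``principal difficulty''---producing closed, $r$-antisymmetric, boundary-vanishing duals $\phi_{ij}$---never arises, since at the contraction step one may freely add closed $2$-forms on $L^2$ without further constraints.

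Your uniqueness paragraph is too compressed to stand on its own: invoking ``a relative Poincar\'e lemma'' presupposes $[P-P']=0$ in $H^2(C_2(L),\partial C_2(L))$, and that is the substance, not a formality. The paper argues via $H^2(C_2(L),\partial)\cong H^2(L^2,\Delta)$: first subtract $d\phi_1$ with $i_\partial^*\phi_1=0$ to land in $\Omega^2(L^2,\Delta)$, then use (\ref{contraction}) to see the image in $H^2(L^2)$ vanishes---this is precisely where that condition is needed---and finish with a primitive $\phi_2\in\Omega^1(L^2)$. You list the trivial pairing with $\Psi\otimes\Psi$ among the properties of $P-P'$ but never say how it kills the relative class; that step should be made explicit.
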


\begin{proof}
Let $U$ be a small tubular neighborhood of the diagonal. Let $\pi_U : U \rightarrow S(TL) $ be the inducted map.
Let $\rho$ be a cutoff function equal to one in a neighborhood of $S(TL)$ and zero outside a compact subset of $U$.
Define preliminarily $ P$ as
$$ P = \rho (\pi_U^* \eta).$$
Equation (\ref{singularity2}) holds.

The differential form $ P$ is closed in a neighborhood of $\partial C_2(TL)$, therefore we can consider $d P$ as a closed form on $\Omega^2(L \times L)$. For any closed differential form $\tau \in \Omega^3(L \times L)$, integrating by parts
we have 
$$ \int_{M^2} (d P) \wedge \tau =  \int_{C_2(L)} (d P) \wedge \tau = \int_{S(TM)} P \wedge i^*_{\Delta} \tau = \int_{\Delta} \tau$$
where in the last equality we have applied (\ref{singularity2}).
It follows that $dP$ and $K$ are in the same cohomology class in $\Omega^3(L \times L )$. Therefore there exists a differential form $\phi \in \Omega^2(L \times L)$ such that
$$ K = dP + d \phi . $$
Replacing $P$ with $P + \phi$ equation (\ref{differential}) holds. 

Observe that (\ref{singularity2}) and (\ref{differential}) do not change if we add to $P$ a closed form of $\Omega^2(L \times L)$. Of course we can find a such differential such that also (\ref{contraction}) holds. 

Finally, $P$ will also satisfy (\ref{parity}) if we choice the cut off function $\rho$ such that $r^* \rho = \rho$ and the differential form that we add to $P$ are antisymmetric.

Now suppose that $P' \in \Omega^2(C_2(M))$ is another differential form as in the lemma.

Since $i_{\partial}^* (P-P')=0$, $P-P'$ defines an element of $H^2(C_2(L),S(TL))$. Since $ H^2(C_2(L),S(TL)) \cong H^2(L \times L, \Delta)$, there exists $\phi_1 \in \Omega^1(C_2(L))$ with $i_{\partial}^* \phi_1 =0$ such that $P-P' -d \phi_1 $ is an element of $\Omega^2(L \times L, \Delta) $. Of course we can assume that $r^*(\phi_1)=-\phi_1$.

Property (\ref{contraction}) and $i_{\partial}^* \phi_1 =0$ imply that $\langle P - P' - d \phi_1, \alpha_1 \otimes \alpha_2 \rangle =0$ for each $\alpha_1, \alpha_2 \in \Psi_2 $.

It follows that the image on $H^2(L \times L)$ of $ P - P' - d \phi_1$ is trivial. Thererefore there exists $\phi_2 \in \Omega^1(L \times L)$ such that $ P - P' - d \phi_1 = d \phi_2$. We can assume that $r^*(\phi_2)=-\phi_2$, and then in particular $i_{\partial}^* (\phi_2) =0$. Therefore
$$  P - P' = d (\phi_1 + \phi_2)  $$
with $i_{\partial}^* (\phi_1 + \phi_2) =0$.




\end{proof}

\end{document}